	\providecommand\BibTeX{{%
			\normalfont B\kern-0.5em{\scshape i\kern-0.25em b}\kern-0.8em\TeX}}}
\newcommand\blue[1]{{\color{blue}#1}}
\newcommand\red[1]{{\color{red}#1}}
\newcommand{\Norm}[1]{\left\lVert#1\right\rVert}
\newcommand{\palinsum}[1]{{B_{#1}}}
\newcommand{\altDelta}{\tilde{\Delta}}
\newcommand{\class}[2]{[#1]_{#2}}
\newtheoremstyle{case}{}{}{}{}{}{:}{ }{}
\theoremstyle{case}
\begin{document}  		
	
	\title{One-quasihomomorphisms from the integers into symmetric matrices}

	\author{Tim Seynnaeve}
	\affiliation{%
		\institution{KU Leuven}
		\country{Belgium}
	}
	\email{tim.seynnaeve@kuleuven.be}
	
	\author{Nafie Tairi}
	\affiliation{%
		\institution{Universit{\"a}t Bern}
		\country{Switzerland}}
	\email{nafie.tairi@unibe.ch}
	
	\author{Alejandro Vargas}
	\affiliation{%
		\institution{Nantes Université}
		\country{France}
	}
	\email{alejandro@vargas.page}
	
	\renewcommand{\shortauthors}{Tim Seynnaeve, Nafie Tairi, Alejandro Vargas}
	
	\begin{abstract}
		A function $f$ from $\ZZ$ to the symmetric matrices over an arbitrary field $K$ of characteristic 0 is a 1\-/quasihomomorphism if the matrix $f(x+y) - f(x)-f(y) $ has rank at most 1 for all $x,y \in \ZZ$.
		We show that any such $1$\-/quasihomomorphism has distance at most 2 from an actual group homomorphism. This gives a positive answer to a special case of a problem posed by Kazhdan and Ziegler.
	\end{abstract}

	\keywords{Quasihomomorphisms, rank metric, linear approximation}

	\maketitle
	
	\section{Introduction}
	
	We continue the program initiated in \cite{DESTV22} of studying particular instances of a problem posed by Kazhdan and Ziegler in their work on approximate cohomology \cite{KZ18}. 
	We are given a function $f$ that behaves roughly like a homomorphism, in the following manner.

	\begin{de}
		Let $(H,+)$ be an abelian group.
		A \emph{norm} on $H$ is a map $\norm{\cdot}: H \to \mathbb{R}$ such that 
		\begin{itemize}
			\item $\norm{x} \geq 0$ for all $x \in H$, with equality if and only if $x=0$,
			\item $\norm{x+y} \leq \norm{x} + \norm{y}$ for all $x,y \in H$,
			\item $\norm{-x} = \norm{x}$ for all $x \in H$. 
		\end{itemize}
		Note that equipping $H$ with a norm is equivalent to equipping it with an equivariant metric $d$, that is, a metric such that $d(x,y)=d(x+z,y+z)$ for all $x,y,z \in H$; the connection is given by $d(x,y)=\norm{x-y}$.
	\end{de}

	\begin{de} 
		\label{de:cQuasiMorphism}
		Let $(G,+)$ and $(H,+)$ be abelian groups, where $H$ is equipped with a norm $\norm{\cdot}$.
		A map $f : G \to  H $ is a \emph{$c$\-/quasihomomorphism} (where $c \in \mathbb{R}_{\geq 0}$)
		if for all $x, y \in \ZZ$ we have that 
		\begin{equation} \label{eq:quasiMorphismGeneral}
			\norm{f(x+y) - f(x) - f(y)} \le c.
		\end{equation}
	\end{de}
	
	The natural question is whether every $c$\-/quasihomomorphism can be approximated by an actual group homomorphism. 
	\begin{que} \label{que:main}
		Fix $G$, $H$ and $c$. Does there exist a constant $C \in \mathbb{R}_{\geq 0}$ such that for every $c$\-/quasihomomorphism $f: G \to H$, there exists a group homomorphism $\varphi: G \to H$ such that
		\[
		\forall x \in G: \quad \norm{f(x) - \varphi(x)} \leq C.
		\]
	\end{que}
	
	A variant of this question, where $G=H$ and $G$ can be nonabelian, was asked already by Ulam \cite[Chapter VI.1]{Ulam} in 1960. 
	Our case of interest is when $G=\ZZ$ is the additive group of integers, and $H$ is the additive group of matrices over some field $\ourfield$, where the norm is given by the rank. The argument from \cite[Remark 1.11]{DESTV22} shows that in this case the answer is affirmative for fields of positive characteristic. For the rest of the paper, we will fix a field $\ourfield$ of characteristic $0$.
	Note that every group morphism $\varphi: \ZZ \to H$ is of the form $\varphi(x)=x\cdot A$, where $A\in H$ is a fixed element. 
	\begin{que} \label{que:mainSpecial}
		Fix $c \in \NN$. Does there exist a constant $C \in \mathbb{R}_{\geq 0}$ such that for every natural number $n$ and every $c$\-/quasihomomorphism $f: \ZZ \to \operatorname{Mat}(n\times n,\ourfield)$, there exists a matrix $A \in \operatorname{Mat}(n\times n,\ourfield)$ such that
		\[
		\forall x \in \ZZ: \quad \rk{f(x) - x\cdot A} \leq C.
		\]
	\end{que}
	
	This is the instance of Question \ref{que:main} asked by Kazhdan and Ziegler.
	It was answered affirmatively in \cite{DESTV22} under the assumption that $f$ lands in the space of diagonal matrices and by choosing $C=28c$. 
	In this paper, we study the case $c=1$. 
	We are able to prove a much better bound than the predicted $C=28$: indeed, the constant $C$ can be chosen equal to $2$. 
	Moreover, we can weaken the assumption that $f$ lands in the space of diagonal matrices.

	\begin{thm} 
		\label{thm:GeneralCase}
		Assume $\operatorname{char}(\ourfield)=0$ and let $\Sym{n \times n, \ourfield}$ be the space of symmetric matrices.
		If $f: \ZZ \to \Sym{n \times n, \ourfield}$ is assumed to be a 1\-/quasihomomorphism,
		there is an $A \in \Sym{n \times n, \ourfield} $ such~that 
		\begin{equation} \label{eq:mainTheorem}
			\rk{f(x) - x\cdot A} \le 2 \quad \forall \, x \in \ZZ.
		\end{equation}  
	\end{thm}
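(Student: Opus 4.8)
The plan is to prove that $A:=f(1)$ already works, by showing that the ``coboundary'' $g(x,y):=f(x+y)-f(x)-f(y)$ is \emph{confined}: all its values have column space inside a single subspace $V$ with $\dim V\le 2$.

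\textit{Reductions.} Applying the hypothesis at $x=y=0$ gives $g(0,0)=-f(0)$, so $\rk{f(0)}\le1$. Replacing $f$ by $x\mapsto f(x)-x\cdot f(1)$ changes neither $g$ nor the statement of the theorem (if $\tilde A$ works for the new map, then $\tilde A+f(1)$ works for $f$), so I may assume $f(1)=0$. Then $f(n)=\sum_{k=1}^{n-1}g(k,1)$ for $n\ge1$, and in general $g(a,b)$ is, for all $a,b\in\ZZ$, an explicit $\{0,\pm1\}$-combination of the ``increments'' $G_k:=g(k,1)$ ($k\in\ZZ$), where $G_0=-f(0)$; in particular each $g(a,b)$ has column space in the span $V$ of the column spaces of all the $G_k$.

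\textit{Confinement lemma.} I must show $\dim V\le2$. The key linear-algebra input is a rigidity fact: a symmetric matrix of the form $c_1v_1v_1^{T}+c_2v_2v_2^{T}+c_3v_3v_3^{T}$ with all $c_i\neq0$ and $v_1,v_2,v_3$ linearly independent has rank exactly $3$ (extend to a basis and diagonalize); consequently, whenever some evaluation of $g$ is a $3$-term combination $\pm G_i\pm G_j\pm G_l$, its rank being $\le1$ forces $\operatorname{col}(G_i)+\operatorname{col}(G_j)+\operatorname{col}(G_l)$ to have dimension $\le2$. Such $3$-term evaluations are abundant: $g(2,b)=G_b+G_{b+1}-G_1$ for $b\ge2$, and $g(-1,b)=G_{-1}+G_0-G_{b-1}$ for $b\ge2$, together with their mirror images on the negative side. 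From the second family, as soon as $W:=\operatorname{col}(G_{-1})+\operatorname{col}(G_0)$ is $2$-dimensional we get $\operatorname{col}(G_c)\subseteq W$ for every $c\ge1$; feeding this and the first family into a short induction --- and separately handling the degenerate cases where several of these column spaces coincide --- pins every $G_k$ into a common subspace of dimension $\le2$.

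\textit{Conclusion.} Granting the lemma, each $f(n)$ has column space in $V$: for $n\ge1$ because $f(n)=\sum_{k=1}^{n-1}g(k,1)$, for $n=0$ because $f(0)=-g(0,0)$, and for $n=-m<0$ because $f(-m)=f(0)-f(m)-g(m,-m)$. A symmetric matrix whose column space has dimension $\le2$ has its row space there too, hence (in a suitable basis) is supported on a $2\times2$ block and has rank $\le2$. So $\rk{f(x)}\le2$ for all $x$ in the reduced situation, i.e.\ $\rk{f(x)-x\cdot f(1)}\le2$ for the original $f$. The main obstacle is the confinement lemma --- specifically the combinatorial task of arranging the $3$-term relations, across positive, zero and negative indices and their degeneracies, so that the whole span actually collapses to dimension $2$; the rigidity fact and the reductions are routine.
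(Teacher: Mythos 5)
Your reductions (normalizing so that $f(1)=0$, expressing everything through the increments $G_k=\Delta(k)=f(k+1)-f(k)$) and your rank-rigidity fact are fine, but the core of your plan --- the ``confinement lemma'' asserting that all the $G_k$ have column spaces inside a single subspace of dimension at most $2$, so that $A=f(1)$ always works --- is false, not merely unproven. Here is a counterexample. Work in $\Sym{3 \times 3, \ourfield}$, let $E_{ij}$ denote the symmetric matrix units, put $B=2E_{11}+E_{22}$, and set $P_1=0$, $P_2=E_{11}$, $P_3=E_{11}+E_{22}$, $P_4=P_5=B$. Write every $x\in\ZZ$ uniquely as $x=5a+r$ with $a\in\ZZ$, $r\in\{1,\dots,5\}$, and define
\[
f(x)=aB+P_r, \quad r\in\{1,2,3,4\}, \qquad f(x)=aB+P_5+(a+1)E_{33}, \quad r=5,
\]
so that $f(5k)=k\,(2E_{11}+E_{22}+E_{33})$ and $f(1)=0$. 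Checking residues shows $\rk{f(x+y)-f(x)-f(y)}\le 1$ for all $x,y\in\ZZ$: if none of $x,y,x+y$ is divisible by $5$ the defect is one of $0,\pm E_{11},\pm E_{22}$; if exactly one of them is divisible by $5$ the block parts cancel (this uses $P_r+P_{5-r}=B$) and the defect is a scalar multiple of $E_{33}$; if all are divisible by $5$ the defect is $0$. So $f$ is a $1$\-/quasihomomorphism with $f(1)=0$, yet $\rk{f(5k)}=3$ for $k\neq 0$, and already $\Delta(1)=E_{11}$, $\Delta(2)=E_{22}$, $\Delta(4)=E_{33}$ span a $3$\-/dimensional space of columns. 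Hence both your confinement lemma and your conclusion that $A=f(1)$ suffices fail.

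The missing idea is that a $1$\-/quasihomomorphism can drift linearly with a slope that is not visible from $f(1)$: in the example the right choice is $A=\tfrac{1}{5}(2E_{11}+E_{22})=f(4)/5$, and indeed $f(5a+r)-(5a+r)A=P_r-\tfrac{r}{5}B$ lies in the span of $E_{11},E_{22}$ for $r<5$, while $f(5k)-5kA=kE_{33}$ has rank $1$. Detecting this drift is exactly the content of the paper's argument: one shows the difference sequence $\Delta$ is ``almost periodic almost palindromic'' with some period $p$, that the palindromic block spans a space of dimension at most $2$, and then takes $A$ to be the block sum divided by $p$ (here $p=5$). Your three-term rank relations are genuinely part of that analysis (they correspond to the paper's Corollary~\ref{cor:ABmatrices} and Corollary~\ref{cor:symMatrixRank2}), but they can only confine the increments \emph{within one period}, not globally; the global structure consists of a confined periodic part plus arbitrary ``cancelling spikes'' such as $\pm kE_{33}$ above. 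Separately, even on its own terms your write-up concedes that the combinatorial heart of the confinement lemma is not carried out, so the proposal is incomplete as well as based on a false claim.
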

	
	The rest of this paper is devoted to proving Theorem \ref{thm:GeneralCase}. 
	The strategy is to prove that the sequence of consecutive differences $\Delta_f(i) = f(i+1) - f(i)$ satisfies two kinds of symmetries.
	One is a reflection symmetry in a local sense, which we call palindromicity.
	The other is a periodicity. 
	By expressing $f$ as a sum of deltas, and applying the symmetries, we arrive to the result.

	\section{Lemmas about symmetric matrices}

	In this section we prove some elementary lemmas about symmetric matrices that we will use later during the proof. %
	Let $(\cdot, \cdot)$ be the bilinear form on $\KK^n$ given by 
	\[
	(x,y) \coloneqq x_1y_1 + \cdots + x_ny_n
	\]
	for all $x,y \in \KK^n$. Then an $n \times n$ matrix over $\KK$ is symmetric if that $(Ax,y) = (x,Ay)$ for all $x,y \in \KK^n$. 
	
	\begin{lm} \label{lm:symperp}
		Let $A \in \Sym{n \times n, \KK}$. Then $\im(A) = \ker(A)^{\perp}$.
	\end{lm}
	\begin{proof}
		Since our bilinear form is nondegenerate we see that
		\[
		Ax = 0  \iff  (Ax,y)=0 \ \ \forall y  \iff  (x,Ay) = 0 \ \ \forall y.
		\]
		Therefore, 
		\[
		x \in \ker(A)  \iff x \perp \im(A),
		\]
		which means that $\im(A) = \ker(A)^{\perp}$.
	\end{proof}
	
	\begin{lm} \label{le:symmatmain}
		Let $A,B$ be symmetric matrices. Moreover, suppose that $\im (A) \cap \im (B) = 0$. 
		Then $\rk{A+B} = \rk{A}+\rk{B}$.
	\end{lm}
	\begin{proof}
		We always have inequalities
		\begin{align*}
			\rk{A+B} &= \dim \im (A+B) \\
			&\leq \dim (\im(A) + \im(B)) \\
			&\leq \dim(\im (A)) + \dim(\im(B))\\
			&= \rk{A} + \rk{B}.
		\end{align*}
		Our assumption $\im(A) \cap \im(B) = 0$ implies that the second ``$\leq$" is an equality. We show that the first ``$\leq$" is an equality as well. For this we need to show that $\im(A+B)=\im(A)+\im(B)$. Taking $\perp$ of both sides and applying Lemma \ref{lm:symperp}, this is equivalent to showing $\ker(A+B)=\ker(A)\cap \ker(B)$. But this again follows from our assumption $\im(A) \cap \im(B) = 0$:
		\begin{gather*}
			v \in \ker(A+B) \implies Av=-Bv \implies \\
			Av=Bv=0 \implies v \in \ker(A) \cap \ker(B). \qedhere
		\end{gather*}
	\end{proof}
	In fact, we will only need the following corollaries:
	\begin{cor} 
		\label{cor:ABmatrices}
		Let $A, B$ be symmetric matrices.
		If $\rk B = 1$ and $\symmspan B \not \subset \symmspan A$, 
		then  $\rk{A+B} = \rk{A} + 1$.
	\end{cor}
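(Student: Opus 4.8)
The plan is to obtain this as an immediate consequence of Lemma~\ref{le:symmatmain}, the only point being to verify that the hypothesis of that lemma is met. The crucial observation is that if $\rk B = 1$, then $\symmspan B$ is a one\-/dimensional subspace of $\KK^n$, so $\symmspan A \cap \symmspan B$, being a subspace of a line, is either $0$ or all of $\symmspan B$. The second possibility is equivalent to $\symmspan B \subset \symmspan A$, which is precisely what we have assumed does not happen. Hence $\symmspan A \cap \symmspan B = 0$.

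With this in hand, I would simply invoke Lemma~\ref{le:symmatmain}: since $A$ and $B$ are symmetric matrices whose images intersect trivially, we get $\rk(A+B) = \rk A + \rk B$, and substituting $\rk B = 1$ yields $\rk(A+B) = \rk A + 1$, as claimed.

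There is essentially no obstacle here; the proof is a two\-/line reduction. The only thing worth stating carefully is the elementary fact that any subspace of a one\-/dimensional space is either $\{0\}$ or the whole space, which is exactly what converts the ``not contained in'' hypothesis of the corollary into the ``trivial intersection'' hypothesis of Lemma~\ref{le:symmatmain}.
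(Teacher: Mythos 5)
Your proof is correct and follows the same route as the paper, which simply invokes Lemma~\ref{le:symmatmain} for a rank-one $B$; your extra remark that a subspace of a line is either zero or the whole line is exactly the (unstated) detail needed to pass from $\symmspan B \not\subset \symmspan A$ to $\symmspan A \cap \symmspan B = 0$. Nothing further is needed.
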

	\begin{proof}
		This is just the main claim for $B$ of rank one. %
	\end{proof}
	\begin{cor} \label{cor:symMatrixRank2}
		Let $A \in \Sym{n \times n, \ourfield}$ with $\rk{A} \leq 2$. Assume there are three rank-1 symmetric matrices $B_i$ ($i=1,2,3$) such that $\dim(\symmspan{B_1} + \symmspan{B_2} + \symmspan{B_3} ) = 3$ and $\rk{A-B_i} \le 1$ for $i=1,2,3$. Then $A=0$.
	\end{cor}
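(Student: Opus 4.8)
The plan is to argue by contradiction. Suppose $A \neq 0$, so $\rk{A} \geq 1$. I want to show that each hypothesis $\rk{A - B_i} \leq 1$ forces $\symmspan{B_i} \subseteq \symmspan{A}$; summing over $i = 1,2,3$ will then give $\symmspan{B_1} + \symmspan{B_2} + \symmspan{B_3} \subseteq \symmspan{A}$, contradicting $3 = \dim(\symmspan{B_1} + \symmspan{B_2} + \symmspan{B_3}) \leq \dim \symmspan{A} = \rk{A} \leq 2$.

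First I would note the trivial but useful fact that for a rank-one symmetric matrix $B$ one has $\symmspan{-B} = \symmspan{B}$, so that Corollary \ref{cor:ABmatrices} may be applied to the pair $(A, -B_i)$, each $-B_i$ being a rank-one symmetric matrix.

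The key step --- which is really the entire argument --- is the contrapositive of Corollary \ref{cor:ABmatrices}. Fix $i$. If $\symmspan{B_i} = \symmspan{-B_i}$ were not contained in $\symmspan{A}$, then the corollary would yield $\rk{A - B_i} = \rk{A} + 1 \geq 2$, contradicting $\rk{A - B_i} \leq 1$. Hence $\symmspan{B_i} \subseteq \symmspan{A}$ for $i = 1,2,3$, and therefore
\[
\symmspan{B_1} + \symmspan{B_2} + \symmspan{B_3} \subseteq \symmspan{A}.
\]
Taking dimensions gives $3 \leq \rk{A} \leq 2$, which is absurd; so $A = 0$.

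I do not anticipate a real obstacle: Lemma \ref{le:symmatmain} and Corollary \ref{cor:ABmatrices} carry all the weight, and what is left is a one-line rank count. The only thing demanding a moment's care is the bookkeeping around $\symmspan{\cdot}$: checking that each $\symmspan{B_i}$ is a line (so the dimension hypothesis on the $B_i$ is meaningful), that negating $B_i$ does not change it, and that $\dim \symmspan{A} = \rk{A}$, so the dimension count can be played off directly against $\rk{A} \leq 2$.
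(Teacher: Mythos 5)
Your proof is correct and takes essentially the same route as the paper: the paper applies the contrapositive of Lemma~\ref{le:symmatmain} directly to get $\im(B_i)\subseteq\im(A)$, while you invoke its rank-one specialization, Corollary~\ref{cor:ABmatrices}, applied to $(A,-B_i)$, which is the same argument. The concluding dimension count ($3\le\rk{A}\le 2$) is identical.
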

	\begin{proof}
		Suppose by contradiction that $\rk{A} \geq 1$. Then $$
		\rk{A-B_i}\leq 1 < 2 \leq \rk{A}+\rk{B_i},
		$$ thus by the contraposition of Lemma \ref{le:symmatmain}, 
		it follows that $\im(B_i) \subseteq \im(A)$. However, this would imply that 
		\[
		3=\dim(\im(B_1)+\im(B_2)+\im(B_3)) \leq \dim(\im(A)) = \rk{A} \leq 2,
		\]
		which is a contradiction.
	\end{proof}
	
	\section{Delta sequence}
	
	We begin by arguing that without loss of generality, we can assume that $f(1)=0$. This follows from the following observation.
	\begin{observation} \label{lemma:f(1)=0} 
		Let $H$ be a normed abelian group and $f:\ZZ \to H$ any function. If $g$ is defined by 
		\[g(x)=f(x)+ x\cdot C,\] 
		where $C \in  H $, then: %
		\begin{itemize}
			\item $f$ is a 1\-/quasihomomorphism if and only if $g$ is.
			\item We have that
			\[
			\norm{f(x) - x\cdot A} \le 2 \iff \norm{g(x) - x\cdot A'} \le 2,
			\]  
			where $A'=A-C$
		\end{itemize}
		Hence, by choosing $C=-f(1)$, we see that proving Theorem \ref{thm:GeneralCase} under the additional assumption $f(1)=0$ is enough to prove it in general.
	\end{observation}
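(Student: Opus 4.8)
The plan is to treat this as a pure normalization step: the function $g$ differs from $f$ only by the genuine homomorphism $x \mapsto x \cdot C$, and neither the $1$\-/quasihomomorphism condition nor the property ``lies within rank $2$ of a homomorphism'' is affected by adding a homomorphism. So I would verify each of the two bullet points by a single cancellation, and then specialize $C$.

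First I would record the identity for the quasihomomorphism defect. Expanding the definition of $g$, the $C$\-/terms cancel:
\[
g(x+y) - g(x) - g(y) = (f(x+y) + (x+y)C) - (f(x) + xC) - (f(y) + yC) = f(x+y) - f(x) - f(y)
\]
for all $x,y \in \ZZ$, so the two defect matrices are literally equal and in particular have equal rank. Hence $f$ satisfies the defining inequality of a $1$\-/quasihomomorphism if and only if $g$ does; since $f$ is recovered from $g$ by subtracting the homomorphism $x \mapsto x\cdot C$, the construction is symmetric and this is genuinely an equivalence.

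Next I would handle the approximation statement. Setting $A' := A + C$, the same cancellation gives
\[
g(x) - x \cdot A' = f(x) + x\cdot C - x\cdot A - x\cdot C = f(x) - x\cdot A
\]
for all $x \in \ZZ$, so $\rk{g(x) - x\cdot A'} = \rk{f(x) - x\cdot A}$. Since $A \mapsto A + C$ is a bijection of $\Sym{n\times n,\ourfield}$ onto itself, it follows that there is a symmetric matrix $A$ with $\rk{f(x) - x\cdot A} \le 2$ for all $x$ if and only if there is a symmetric matrix $A'$ with $\rk{g(x) - x\cdot A'} \le 2$ for all $x$.

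Finally I would take $C = -f(1)$, so that $g(1) = f(1) - f(1) = 0$. Combining the two equivalences, Theorem \ref{thm:GeneralCase} for an arbitrary $1$\-/quasihomomorphism $f$ follows from the same statement applied to $g$, which additionally satisfies $g(1)=0$. There is no genuine obstacle in this argument; the only point requiring care is bookkeeping the substitutions $A \leftrightarrow A'$ and $C \leftrightarrow -C$ consistently between the two directions of each equivalence.
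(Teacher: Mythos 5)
Your proof is correct and is exactly the cancellation argument the paper has in mind (the paper states this as an observation without proof, since adding the homomorphism $x\mapsto x\cdot C$ changes neither the defect $f(x+y)-f(x)-f(y)$ nor the property of being within rank $2$ of a homomorphism). One small point: you take $A'=A+C$ while the statement says $A'=A-C$; your relation is the one that actually makes $g(x)-x\cdot A'=f(x)-x\cdot A$, so the statement's formula is a sign slip, and in any case the conclusion is unaffected because $C$ ranges over all of $H$ and $A\mapsto A+C$ is a bijection.
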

	
	From now on we always assume $f(1)=0$. This allows us to reformulate the condition of $f$ being a 1\-/quasihomomorphism in terms of a difference operator on~$f$.
	
	\begin{de} 
		\label{de:DeltaSequence}
		Given a function $f : \ZZ \to H$, 
		we define its \emph{delta map} $\Delta_f(x) : \ZZ \to H$ as 
		\[ \Delta_f(x) = f(x+1) - f(x).  \] 
	\end{de}
	
	\begin{re}
		If $f(1)=0$, we can write $f$ in terms of $\Delta_f$:
		\begin{equation}\label{eq:findeltapos}
			f(x)=\sum_{i=1}^{x-1}{\Delta_f(i)} \text{ for } x \geq 1,
		\end{equation}
		and
		\begin{equation}\label{eq:findeltaneg}
			f(x)=-\sum_{i=0}^{x}{\Delta_f(i)} \text{ for } x \leq 0.
		\end{equation}
	\end{re}

	\begin{lm} 
		\label{lm:cQuasiMorIFFrkDeltaSeqlessc}
		Let $f : \ZZ \to H$ be a map with $f(1)=0$.
		The map $f$ is a $c$\-/quasihomomorphism 
		if and only if for all $k \in \ZZ_{\ge 0}$ and $z \in \ZZ$ we have %
		
		\begin{align}
			\label{eq:PositiveSide}
			\Norm{\sum_{i=1}^{k} \Delta_f(i) - \sum_{i=0}^{k} \Delta_f(z-i)}  &\le c,  \\
			\Norm{\sum_{i=0}^{k} \Delta_f(-i) - \sum_{i=0}^{k-1} \Delta_f(z-i)}  &\le c. \label{eq:NegativeSide}
		\end{align} 
		
	\end{lm}
	
	\begin{proof}
		In essence, this is just plugging in Equations~\eqref{eq:findeltapos} and \eqref{eq:findeltaneg} into Equation~\eqref{eq:quasiMorphismGeneral}. We present the proof in a slightly different way, to avoid doing case distinctions on the signs of $x$, $y$, and $x+y$. Calculate: 
		\begin{align*}
			& \sum_{i=1}^{k} \Delta(i) - \sum_{i=0}^{k} \Delta(z-i)  = \\
			& \sum_{i=1}^{k} [f(i+1) - f(i)]
			- \sum_{i=0}^{k} [f(z-i+1) - f(z-i)] = \\
			& f(k+1) +  f(z-k) - f(z+1).
		\end{align*} 
		By setting $x=k+1, y = z-k$, 
		we see that Equation~\eqref{eq:PositiveSide} holds if and only if 
		the $c$\-/quasihomomorphism condition~\eqref{eq:quasiMorphismGeneral} is fullfilled for $x \in \ZZ_{\ge 1}$ and $y \in \ZZ$.
		Similarly, calculate: 
		\begin{align*}
			& \sum_{i=0}^{k} \Delta(-i) - \sum_{i=0}^{k-1} \Delta(z-i) = \\
			& \sum_{i=0}^{k} [f(-i+1) - f(-i)] 
			- \sum_{i=0}^{k-1} [f(z-i+1) - f(z-i)] = \\
			& -f(-k) - f(z+1) + f(z+1-k).
		\end{align*} 
		By setting $x=-k, y = z+1 $, %
		we see that Equation~\eqref{eq:NegativeSide} is equivalent to the quasihomomorphism condition for $x \in \ZZ_{\le 0}$ and $y \in \ZZ$, and we are done. 
	\end{proof}
	
	In particular, Condition~(\ref{eq:PositiveSide}) for $k=0$ states that $\norm{\Delta(y)} \leq c$ for all $y \in \ZZ$.
	
		\begin{no}
		For the rest of this paper, $f$ will denote a $1$\-/quasihomomorphism $\ZZ \to \Sym{n \times n, \ourfield}$ with $f(1)=0$; its delta map $\Delta_f$ will be denoted by $\Delta$. We will denote $\symmspan{\Delta(i)}$ by $L_i$.  Since $\rk{\Delta(i)}\leq 1$, we have that $\dim (\deltaspace i) \leq 1$.
	\end{no}
	
	Note that if $\dim (\sum_{i \in \ZZ}{L_i}) \leq 2$, 
	then by \eqref{eq:findeltapos} and \eqref{eq:findeltaneg} we also have $\rk{f(x)} \leq 2$ for all $x \in \ZZ$, and Theorem \ref{thm:GeneralCase} is true with $A=0$. So from now on we will assume: %

	\begin{assumption} \label{assum:dimSpanGeq3}
		$\dim (\sum_{i \in \ZZ}{L_i}) \geq 3$.
	\end{assumption}
	
	Then we can make the following observation.
	
	\begin{lm} \label{lem:-1plus0}
		If Assumption~\ref{assum:dimSpanGeq3} holds, then $\Delta(0)+\Delta(-1) =0$.
	\end{lm}
	\begin{proof}
		Note that Equation~\eqref{eq:NegativeSide} for $k=1$ tells us that for all $z \in \ZZ$ we have
		\[
		\rk{\Delta(0)+\Delta(-1)-\Delta(z)} \leq 1.
		\]
		By Assumption~\ref{assum:dimSpanGeq3}, we can apply Corollary~\ref{cor:symMatrixRank2} to conclude that
		$\Delta(0)+\Delta(-1) = 0$. 
	\end{proof}
	\begin{observation} \label{obs:symmetryOfAssumption} 
		Still working under Assumption \ref{assum:dimSpanGeq3},
		now Equation~\eqref{eq:NegativeSide} for $k \ge 0$ can be rewritten as 
		\begin{equation} \label{eq:NegativeSideShort}
			\rk{\sum_{i=2}^{k+1} \Delta(-i) - \sum_{i=0}^{k} \Delta(y-i)}  \le 1.
		\end{equation}
		Note the symmetry: if we define $\tilde{\Delta}(x) := \Delta(-1-x)$, then $\Delta$ satisfies the assumptions (\ref{eq:PositiveSide}) and (\ref{eq:NegativeSideShort}) if and only if $\tilde{\Delta}$ does. 
	\end{observation}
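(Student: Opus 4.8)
The statement has two independent parts, and I would treat them in order. Both are pure bookkeeping; there is no conceptual obstacle, only index algebra to keep straight.

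\emph{Rewriting \eqref{eq:NegativeSide} as \eqref{eq:NegativeSideShort}.} I would split the family of inequalities \eqref{eq:NegativeSide} into the single case $k=0$ and the cases $k\ge 1$. For $k=0$ the second sum is empty and \eqref{eq:NegativeSide} reads $\rk{\Delta(0)}\le 1$, which holds trivially, so it contributes nothing. For $k\ge 1$ I would reindex by replacing $k$ with $k+1$, turning the condition into $\rk{\sum_{i=0}^{k+1}\Delta(-i)-\sum_{i=0}^{k}\Delta(z-i)}\le 1$ for all $k\ge 0$. Now Lemma~\ref{lem:-1plus0} gives $\Delta(0)+\Delta(-1)=0$, so the $i=0$ and $i=1$ terms of $\sum_{i=0}^{k+1}\Delta(-i)$ cancel and this sum equals $\sum_{i=2}^{k+1}\Delta(-i)$; after renaming $z$ to $y$ we obtain exactly \eqref{eq:NegativeSideShort}. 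Every step here is reversible (the reindexing $k\mapsto k+1$ is a bijection from $\{k\ge 1\}$ to $\{k\ge 0\}$, adding the trivial $k=0$ case changes nothing, and the cancellation is an equality), so the two families of inequalities are equivalent under Assumption~\ref{assum:dimSpanGeq3}.

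\emph{The symmetry.} First note that $x\mapsto -1-x$ is an involution of $\ZZ$, hence $\Delta\mapsto\tilde\Delta$ with $\tilde\Delta(x)=\Delta(-1-x)$ is an involution, so it suffices to show that it interchanges the two conditions. I would substitute the definition of $\tilde\Delta$ into \eqref{eq:PositiveSide} written for $\tilde\Delta$ and simplify each sum by an index shift: $\sum_{i=1}^{k}\tilde\Delta(i)=\sum_{i=1}^{k}\Delta(-1-i)=\sum_{j=2}^{k+1}\Delta(-j)$, and $\sum_{i=0}^{k}\tilde\Delta(z-i)=\sum_{i=0}^{k}\Delta(i-z-1)$, which after the substitution $i\mapsto k-i$ becomes $\sum_{i=0}^{k}\Delta((k-z-1)-i)$. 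Thus \eqref{eq:PositiveSide} for $\tilde\Delta$ with parameters $(k,z)$ is literally \eqref{eq:NegativeSideShort} for $\Delta$ with parameters $(k,y)$ where $y=k-z-1$; since $z\mapsto k-z-1$ is a bijection of $\ZZ$, the whole family \eqref{eq:PositiveSide} for $\tilde\Delta$ is equivalent to the whole family \eqref{eq:NegativeSideShort} for $\Delta$. The mirror computation, using $\sum_{i=2}^{k+1}\tilde\Delta(-i)=\sum_{j=1}^{k}\Delta(j)$ and the analogous reindexing of the $\tilde\Delta(y-i)$ sum, shows \eqref{eq:NegativeSideShort} for $\tilde\Delta$ is equivalent to \eqref{eq:PositiveSide} for $\Delta$. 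Combining the two, $\Delta$ satisfies both \eqref{eq:PositiveSide} and \eqref{eq:NegativeSideShort} if and only if $\tilde\Delta$ does.

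\emph{Main point of care.} The only thing that can go wrong is the index manipulation: reversing an arithmetic progression via $i\mapsto k-i$, the shifts by $\pm1$ inside $\Delta$, and — most importantly — checking that the affine reparametrization of the free integer parameter ($z\leftrightarrow y$) is a bijection of $\ZZ$, so that ``for all $z$'' and ``for all $y$'' carry the same content. I would also explicitly verify the boundary indices (the empty sums occurring at $k=0$) so that no inequality is silently dropped or introduced; this is precisely where Lemma~\ref{lem:-1plus0} does its work, absorbing the $i=0,1$ terms of the negative sum.
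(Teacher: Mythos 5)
Your proof is correct and matches what the paper intends: the paper states this as an observation without proof, and your argument is exactly the routine verification it leaves implicit — cancel $\Delta(0)+\Delta(-1)$ via Lemma~\ref{lem:-1plus0}, shift $k$ by one (noting the $k=0$ case of \eqref{eq:NegativeSide} is vacuous), and check that the involution $\tilde{\Delta}(x)=\Delta(-1-x)$ interchanges the families \eqref{eq:PositiveSide} and \eqref{eq:NegativeSideShort} under the reparametrizations $i\mapsto k-i$ and $z\mapsto k-z-1$. All index manipulations check out, so nothing further is needed.
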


		\begin{figure}[H]
			\centering
			\begin{minipage}{.5\textwidth}
				\resizebox{\textwidth}{!}{
\begingroup%
  \makeatletter%
  \providecommand\color[2][]{%
    \errmessage{(Inkscape) Color is used for the text in Inkscape, but the package 'color.sty' is not loaded}%
    \renewcommand\color[2][]{}%
  }%
  \providecommand\transparent[1]{%
    \errmessage{(Inkscape) Transparency is used (non-zero) for the text in Inkscape, but the package 'transparent.sty' is not loaded}%
    \renewcommand\transparent[1]{}%
  }%
  \providecommand\rotatebox[2]{#2}%
  \newcommand*\fsize{\dimexpr\f@size pt\relax}%
  \newcommand*\lineheight[1]{\fontsize{\fsize}{#1\fsize}\selectfont}%
  \ifx\svgwidth\undefined%
    \setlength{\unitlength}{488.68939257bp}%
    \ifx\svgscale\undefined%
      \relax%
    \else%
      \setlength{\unitlength}{\unitlength * \real{\svgscale}}%
    \fi%
  \else%
    \setlength{\unitlength}{\svgwidth}%
  \fi%
  \global\let\svgwidth\undefined%
  \global\let\svgscale\undefined%
  \makeatother%
  \begin{picture}(1,0.12883781)%
    \lineheight{1}%
    \setlength\tabcolsep{0pt}%
    \put(0,0){\includegraphics[width=\unitlength,page=1]{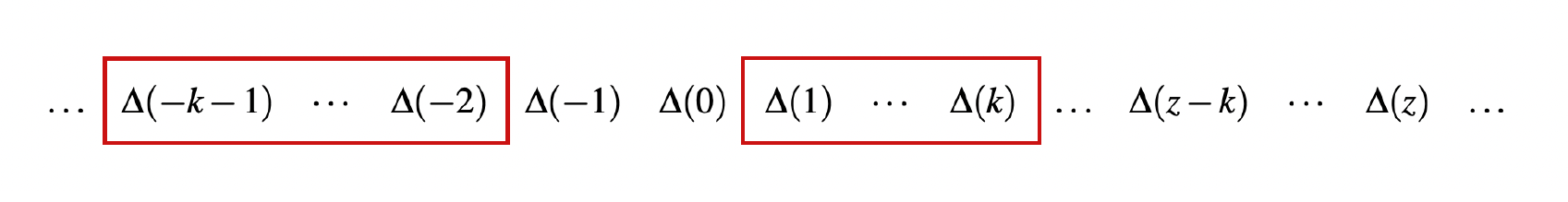}}%
    \put(0.17812373,0.00432838){\makebox(0,0)[lt]{\lineheight{1.25}\smash{\begin{tabular}[t]{l}$k$\end{tabular}}}}%
    \put(0.54501906,0.01028453){\makebox(0,0)[lt]{\lineheight{1.25}\smash{\begin{tabular}[t]{l}$k$\end{tabular}}}}%
    \put(0,0){\includegraphics[width=\unitlength,page=2]{Zeichnung.pdf}}%
    \put(0.80708711,0.00790209){\makebox(0,0)[lt]{\lineheight{1.25}\smash{\begin{tabular}[t]{l}$k+1$\end{tabular}}}}%
  \end{picture}%
\endgroup%

				}
				\caption{\normalfont Equation~\eqref{eq:PositiveSide} says that the sum of the right red block and the sum of the blue block differ by a rank one matrix. Similarily, Equation~\eqref{eq:NegativeSide} says that the sum of the left red block and the sum of the blue block differ by a rank one matrix.}
			\end{minipage}  
		\end{figure}
		
		Next, note that if $\dim (\sum_{i \in \ZZ}{L_i}) \geq 3$ but $\dim (\sum_{i \in \ZZ\setminus\{0,-1\}}{L_i}) \leq 2$, it still holds that $\rk{f(x)} \leq 2$ for all $x \in \ZZ$. So we will replace Assumption~\ref{assum:dimSpanGeq3} with something slightly stronger:
		
		\begin{assumption} \label{assum:dimSpanGeq3bis}
			$\dim (\sum_{i \in \ZZ\setminus\{0,-1\}}{L_i}) \geq 3$.
		\end{assumption}
		
		Under this assumption, we will show that $\Delta$ needs to have a very specific structure.

		\section{Palindromicity}
		\label{sec:Palindromicity}
		Now we show that $\Delta$ satisfies a property reminiscent of palindromes.
		\begin{no}
			For $m \in \NN$, write \begin{align} \label{eq:DefinitionVm} V_m = L_{-m-1} + \cdots +  L_{-2} + L_1 + \ldots +L_m. \end{align}
			Note that $L_{-1}, L_{0}$ are not part of the sum. Assumption~\ref{assum:dimSpanGeq3bis} precisely says that there exists an $m$ with $\dim V_m \geq 3$.
		\end{no}

		\begin{lm} \label{lm:palindromic}
			Let $m$ be such that $V_{m} \supsetneq V_{m-1}$.
			\begin{enumerate}
				\item \label{itemA}  For all $i \in \{1,\ldots,m-1\}$ we have that
				\begin{equation} \label{eq:palindromic}
					\Delta(i)=\Delta(m-i)=\Delta(-i-1)=\Delta(i-m-1).
				\end{equation}	
				\item \label{itemB}  Moreover, if $\dim V_m \geq 3$, it holds that 
				\[\Delta(m+1)=-\Delta(m) \quad \text{ and } \quad \Delta(-m-2)=-\Delta(-m-1).\]
				In particular, $L_m = L_{m+1}$ and $L_{-m-2} = L_{-m-1}$.
			\end{enumerate}  
		\end{lm}
		
		\begin{re}
			\label{re:VisualizationSequence}
			To state Lemma \ref{lm:palindromic} more visually: if $V_{m} \supsetneq V_{m-1}$ and $\dim V_m \geq 3$, then $\Delta$ has the following structure: 
			
			\begin{center}
				\setlength{\tabcolsep}{0.4em}
				\scalebox{0.8}{
					\begin{tabular}{c|ccccccccccccccccccccc}
						$i$&$-m-2$&$-m-1$&&&$\cdots$&&&$-1$&$0$&&&$\cdots$&&&$m$&$m+1$\\
						\hline
						$\Delta(i)$ & \red{$\alpha$} & \red{$-\alpha$} & \blue{$a$} & \blue{$b$} & \blue{$\cdots$} & \blue{$b$} & \blue{$a$} & \red{$\beta$} & \red{$-\beta$} & \blue{$a$} & \blue{$b$} & \blue{$\cdots$} & \blue{$b$} & \blue{$a$} & \red{$\gamma$} & \red{$-\gamma$}
					\end{tabular}.
				}
			\end{center}
		\end{re}
		\begin{proof}
			For Item~(\ref{itemA}) we show $3$ equalities for $i \in \aset{1, \dots, m-1}$:
			\begin{itemize}
				\item  $\Delta(i)=\Delta(m-i)$, which encodes palindromicity of the right blue block; 
				\item  $\Delta(m-i)=\Delta(-i-1)$, which encodes equality of the blocks; 
				\item   $\Delta(-i-1) = \Delta(i-m-1)$, which encodes palindromicity of the left blue block.
			\end{itemize}
			Note that the third equality follows from the first two by substituting $i$ for $m-i$.
			By symmetry (cfr.\ Observation \ref{obs:symmetryOfAssumption}) we may assume that $L_m \not \subset V_{m-1}$. 
			
			We first prove the identity $\Delta(i)=\Delta(m-i)$ by induction on $i$. For the base case $i=1$, observe that setting $k = 1$ and  $z = m$ 
			in Equation~\eqref{eq:PositiveSide} gives 
			\begin{align*}
				&	\rk{\Delta(1) - \Delta(m-1) - \Delta(m)} \leq 1.
			\end{align*}
			By Corollary~\ref{cor:ABmatrices}
			we get that $\Delta(1)=\Delta(m-1)$. 
			For the case $i=2$, we put $k = 2$ and  $z = m$ in Equation~\eqref{eq:PositiveSide}:
			\begin{align*}
				&	\rk{\Delta(1) + \Delta(2) - \Delta(m-2) - \Delta(m-1) - \Delta(m)} \leq 1.
			\end{align*}
			Using $\Delta(1)=\Delta(m-1)$ and Corollary~\ref{cor:ABmatrices} we find $\Delta(2)=\Delta(m-2)$.
			
			One proceeds in a similar fashion for higher $i$. Namely, if the equality is true for $i$, one gets the equality for $i+1$ from Equation~\eqref{eq:PositiveSide} 
			by setting $k= i + 1$ and $z = m$.
			The equality $\Delta(m-i)=\Delta(-i-1)$ is proven analogously, using Equation~\eqref{eq:NegativeSideShort}. 
			
			For Item~(\ref{itemB}), we want to show that $\Delta(m+1)+\Delta(m)=0$. 
			If $i$ is in $\aset{1,\ldots m-1,m}$, Equation~\eqref{eq:PositiveSide} for $z=m+1$ and  $k=i$, 
			combined with (\ref{eq:palindromic}), imply that 
			\[
			\rk{\Delta(m+1)+\Delta(m)-\Delta(i)} \leq 1.
			\] 
			When $i$ is in $\aset{-m-1,-m,\ldots,-2}$  the same equation can be derived from Equation~\eqref{eq:NegativeSideShort} for $z=m+1$ and $k=-i-1$.
			Since $\dim V_m \geq 3$, 
			by Corollary~\ref{cor:symMatrixRank2}
			this implies that $\Delta(m+1)+\Delta(m) = 0$. 
			The proof that $\Delta(-m-2)+\Delta(-m-1)=0$ is analogous.
			Finally we have that $L_m = \symmspan {\Delta(m)} = \symmspan{\Delta(m+1)} =  L_{m+1}$, and analogously for the other one.
		\end{proof}

		\section{\DQ{} sequences}
		Now, our aim is to show that the finite pattern observed in Section~\ref{sec:Palindromicity}
		can be extended to infinity. We call a sequence satisfying this pattern \DQ{}, meaning \emph{almost periodic almost palindromic}. In this section, we define \DQ{} sequences and prove some general lemmas; in the next section we will show that our delta sequence is \DQ{}. For the purposes of this section, $H$ can be any abelian group.
		
		\begin{de} 
			\label{de:AlmostPeriodicAlmostPalindromic}
			A sequence $\Seq{\Delta(i)}_{i=-N}^{N-1}$, with $\Delta(i) \in H$ 
			is \DQ{} with period $p\in[2,N]$,
			if
			\begin{align}
				\Delta({i+p}) &= \Delta(i) &&\text{ if } i \not \equiv -1 \text{ or } 0 \mod p, \label{eq:APAPperiod} \\
				\Delta({j-1}) + \Delta({j}) &= 0 && \forall j \in \{-N+1, \ldots, N-1\} \text{ with } p|j, \label{eq:APAPcancel}\\
				\Delta({p-1-i}) &= \Delta(i) && \forall i = 1, \dots, p-2. \label{eq:APAPpalin}
			\end{align}
			
			From now on we will refer to the respective Conditions~(\ref{eq:APAPperiod}), (\ref{eq:APAPcancel}), (\ref{eq:APAPpalin}).
			We call $\Delta(1), \dots, \Delta({p-3}), \Delta({p-2})$ the \emph{palindromic block}, and will  write $\palinsum{\Delta}$ for the ``block sum" $\Delta(1) + \cdots + \Delta(p-2)$.

		\end{de}
		
		\begin{re} 
			\label{re:IllustrationAPAPdefinition}
			The next two pictures illustrate how an \DQ{} sequence looks like. First we see a global picture:

			\begin{figure}[H]
				\centering
				\begin{minipage}{0.5\textwidth}
					\resizebox{\textwidth}{!}{
\begingroup%
  \makeatletter%
  \providecommand\color[2][]{%
    \errmessage{(Inkscape) Color is used for the text in Inkscape, but the package 'color.sty' is not loaded}%
    \renewcommand\color[2][]{}%
  }%
  \providecommand\transparent[1]{%
    \errmessage{(Inkscape) Transparency is used (non-zero) for the text in Inkscape, but the package 'transparent.sty' is not loaded}%
    \renewcommand\transparent[1]{}%
  }%
  \providecommand\rotatebox[2]{#2}%
  \newcommand*\fsize{\dimexpr\f@size pt\relax}%
  \newcommand*\lineheight[1]{\fontsize{\fsize}{#1\fsize}\selectfont}%
  \ifx\svgwidth\undefined%
    \setlength{\unitlength}{505.3169614bp}%
    \ifx\svgscale\undefined%
      \relax%
    \else%
      \setlength{\unitlength}{\unitlength * \real{\svgscale}}%
    \fi%
  \else%
    \setlength{\unitlength}{\svgwidth}%
  \fi%
  \global\let\svgwidth\undefined%
  \global\let\svgscale\undefined%
  \makeatother%
  \begin{picture}(1,0.02624107)%
    \lineheight{1}%
    \setlength\tabcolsep{0pt}%
    \put(0,0){\includegraphics[width=\unitlength,page=1]{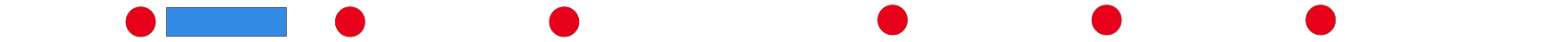}}%
    \put(0.88880382,0.00644383){\makebox(0,0)[lt]{\lineheight{1.25}\smash{\begin{tabular}[t]{l}$\cdots$\end{tabular}}}}%
    \put(0.42369815,0.00453477){\makebox(0,0)[lt]{\lineheight{1.25}\smash{\begin{tabular}[t]{l}$\cdots$\end{tabular}}}}%
    \put(0,0){\includegraphics[width=\unitlength,page=2]{globalpicture.pdf}}%
    \put(-0.00194706,0.00538102){\makebox(0,0)[lt]{\lineheight{1.25}\smash{\begin{tabular}[t]{l}$\cdots$\end{tabular}}}}%
    \put(0,0){\includegraphics[width=\unitlength,page=3]{globalpicture.pdf}}%
  \end{picture}%
\endgroup%

					}
				\end{minipage}  
			\end{figure}
			
			The blue box represents the palindromic block, whereas the red circles represent the $p$-cancellation. Eack box has length $p-2$.
			Note that while the blue box is always meant to be the same, the red circles are not. %
			
			Next, we see the same picture but now zoomed in: \vspace{0.2cm} %
			\begin{center}
				\setlength{\tabcolsep}{0.4em}
				\scalebox{0.8}{
					\begin{tabular}{c|ccccccccccccccccccccc}
						$i$&$(a-1)p-1$&$(a-1)p$&&&$\cdots$&&&$ap-1$\\
						\hline
						$\Delta(i)$ & \red{$\alpha$} & \red{$-\alpha$} & \blue{$a$} & \blue{$b$} & \blue{$\cdots$} & \blue{$b$} & \blue{$a$} & \red{$\beta$}
				\end{tabular}}
			\end{center}
			
			\vspace{0.5cm}
			
			\begin{center}
				\setlength{\tabcolsep}{0.4em}
				\scalebox{0.8}{
					\begin{tabular}{c|ccccccccccccccccccccc}
						$i$&$ap$&&&$\cdots$&&&$(a+1)p-1$&$(a+1)p$\\
						\hline
						$\Delta(i)$ & 
						\red{$-\beta$} & \blue{$a$} & \blue{$b$} & \blue{$\cdots$} & \blue{$b$} & \blue{$a$} & \red{$\gamma$} & \red{$-\gamma$}
					\end{tabular}.}
			\end{center}
			\vspace{0.2cm}
			In this picture we see the cancellation in red and the palindromic block in blue.
		\end{re}

		The following result is a quick calculation that uses the three properties of being an \DQ{} sequence.
		
		\begin{lm} 
			\label{lm:AddingTermsAPAPseq}
			Let $\Seq{\Delta(i)}_{i=-N}^{N-1}$ be an \DQ{} sequence with period $p$.
			For any $k \in \ZZ_{\ge 0}$, the sum of any $kp$ consecutive elements in $\Seq{\Delta(i)}_{i=-N}^{N-1}$, where the index of the first element is not a multiple of $p$ %
			is constant.
			Moreover, this constant equals $k \cdot B_{\Delta}$. \qed %
		\end{lm}

		Our first source of \DQ{} sequences is Lemma~\ref{lm:palindromic}:
		
		\begin{lm} 
			\label{lm:APAPdivides}
			Let $m$ be such that $V_{m} \supsetneq V_{m-1}$ and $\dim V_m \ge 3$.
			The sequence $\Seq{\Delta(i)}_{-m-2}^{m+1}$ is \DQ{} with period $m + 1$.
			Moreover, for any other period $p$ that makes this sequence \DQ{}
			we have that~$p | m + 1$.
		\end{lm}
		
		\begin{proof}
			Since $\dim V_m \ge 3$, the sequence $\Seq{\Delta(i)}_{-m-2}^{m+1}$ is \DQ{} with period $m+1$
			by the two items of Lemma~\ref{lm:palindromic}.
			Now suppose that $\Seq{\Delta(i)}_{-m-2}^{m+1}$ is \DQ{} with period $p$.
			Since $V_{m} \supsetneq V_{m-1}$ at least one of $L_m \not \subset V_{m-1}$ or $L_{-m-1} \not \subset V_{m-1}$ is true. 
			By the symmetry from Observation~\ref{obs:symmetryOfAssumption} we assume the former.

			Suppose that $p$ does not divide $m+1$, so $m \not \equiv -1 \mod p$.
			If additionally we have that $m \not \equiv 0 \mod p$, 
			then $\Delta(m) = \Delta(j)$ with $j$ the residue of $m$ divided by~$p$.
			Since  $j < p < m+1$, we get  $\Delta(m) = \Delta(j)$ is in $V_{m-1}$, a contradiction.
			To finish, assume that $m \equiv 0 \mod p$, so then 
			$\Delta(m-1) + \Delta(m) = 0$, which again implies that $\Delta(m)$ is in $V_{m-1}$, a contradiction.
		\end{proof}

		Next, we use the last claim from Lemma~\ref{lm:APAPdivides} to study how two distinct \DQ{} structures on the same sequence interact.
		We apply this result in Claim~\ref{claim:SmallerPeriod}.

		\begin{lm} \label{lemma:DQgcd}
			Let $(\Delta(i))_{i=-N}^{N-1}$ 
			be \DQ{} with period $p$. Suppose there is a $q \in [2,p-1]$ such that
			\begin{enumerate}
				\item\label{it:qperiod} $\Delta(i) = \Delta(i+q) \text{ for } i=1,\ldots, p-q-2$ ($q$-periodicity), 
				\item\label{it:qpalin} $\Delta(i) = \Delta(q-1-i) \text{ for } i=1,\ldots, q-2$ (palindromicity of the first $q-2$ elements),
				\item\label{it:qcancel} $\Delta(q-1) + \Delta(q) = 0$,
			\end{enumerate}
			Write $g=\gcd(p,q)$. If $g>1$ then $\Delta$ is \DQ{} with period $g$. If $g=1$ then all $\Delta(i)$ are the same up to a sign. %
		\end{lm}
		
		We will deduce this using the following easy number-theoretic lemma: 
		\begin{lm} \label{lemma:equivalenceRelationModulo}
			Let $q<p$ be integers and write $g=\gcd(p,q)$. %
			Consider the equivalence relation $\sim$ on $\ZZ$ generated by:
			\begin{itemize}
				\item $x \sim y$ if $x \equiv y \mod q$ ($q$-periodic),
				\item $x \sim q-1-x$ for $x$ in $\{ 0,\ldots, q-1 \}$ ($q$-palindromic),
				\item $x \sim p-1-x$ for $x$ in $\{0,\ldots, p-1 \}$ ($p$-palindromic).
			\end{itemize}
			Then we have that $x \sim y$ if and only if $x \equiv y \mod g$ or $x+y \equiv -1 \mod g$.
		\end{lm}
		
		\begin{proof}[Proof of Lemma \ref{lemma:equivalenceRelationModulo}]
			We first show $\sim$ is also $p$-periodic. 
			For this, take any $x \in \ZZ$, and let $m \in \ZZ$ be the unique integer for which $p-q \leq x-mq \leq p-1$. Indeed,
			\begin{align*}
				x &\sim x-mq \sim p-1-(x-mq) \sim q-1 - (p-1-x+mq)\\
				&= x - p - (m-1)q \sim x-p. 
			\end{align*} 
			In the previous calculation, $x-mq$ is contained in $\{0,\ldots, p-1 \}$ and $ p-1-(x-mq)$ is in $\{ 0,\ldots, q-1 \}$, so the operations are valid.
			The combination of $q$-periodicity and $p$-periodicity is equivalent to $g$-periodicity, 
			namely $x \sim y$ when $x \equiv y \mod g$.
			Additionally, palindromicity gives $x \sim y$ when $x + y \equiv -1 \mod g$.
			Indeed, by $g$-periodicity we may assume that $x$ is in $\{1, \dots, g-1 \}$, then
			by $q$-palindromicity and periodicity we have that $x \sim q - 1 -x \sim -1 -x \sim y$.
		\end{proof}
		
		\begin{proof}[Proof of Lemma \ref{lemma:DQgcd}]
			We first consider the case $g>1$. 
			Let us write $\class{a}{q}$
			for the unique integer in $\{1,\ldots,q\}$ that is congruent to $a$ modulo $q$.  
			\begin{cl} \label{claim:FirstQ}
				It suffices to check the APAP property on the interval $[1,q]$. In other words: if we verify the identities
				\begin{enumerate}[label=(\alph*)]
					\item\label{it:gperiod} $\Delta(i)=\Delta(i+g)$ for $i\in \{1,\ldots,q-g-2\}$ with $i \not \equiv -1 \text{ or } 0 \mod g$,
					\item\label{it:gcancel} $\Delta(kg-1)+\Delta(kg) = 0$ for $k=1,\ldots,q/g$,
					\item\label{it:gpalin} $\Delta(i)=\Delta(g-1-i)$ for $i=1,\ldots,g-2$,
				\end{enumerate}
				then $\Delta$ is \DQ{} with period $g$.
			\end{cl}
			\begin{proof}
				For palindromicity there is nothing to prove. For periodicity: given any $i \neq -1 \text{ or } 0 \mod g$, we have 
				\[
				\Delta(i)=\Delta(\class{i}{p})=\Delta(\class{\class{i}{p}}{q})=\Delta(\class{i}{g}),
				\]
				where we used $p$-periodicity, $q$-periodicity, and \ref{it:gperiod}.
				
				Cancellation is similar: if $p | j$ then $\Delta(j)=-\Delta(j-1)$ by $p$\-/cancellation; if $g \mid j$ but $p\nmid j$ then we can use $p$-periodicity, $q$-periodicity, and \ref{it:gcancel} to find
				\begin{align*}
					\Delta(j-1)+\Delta(j)&=\Delta(\class{j-1}{p})+\Delta(\class{j}{p})\\
					&=\Delta(\class{\class{j-1}{p}}{q})+\Delta(\class{\class{j}{p}}{q})=0. \qedhere
				\end{align*}
			\end{proof}
			We now verify the conditions \ref{it:gperiod}, \ref{it:gcancel}, \ref{it:gpalin} above. 
			For this, we formally define the $q$-periodic map $\altDelta : \ZZ \to H$ by $\altDelta(i) = \altDelta(\class{i}{q})$.  
			Since $\Delta$ and  $\altDelta$ agree on the interval $[1,q]$, by Claim~\ref{claim:FirstQ} we may work now with $\altDelta$ instead.
			We consider the equivalence relation $\sim$ from the previous lemma. 
			Then showing \ref{it:gperiod} and \ref{it:gpalin} amounts to showing that $\altDelta$ is constant on every equivalence class except for the one generated by $0$. 
			Indeed, two numbers $x$ and $y$ in the same equivalence class can be connected by a chain as in Lemma \ref{lemma:equivalenceRelationModulo}, and the only case this doesn't imply an equality of $\altDelta$ is when $x=0$, $q-1$, or $p-1$, but then we are in the bad equivalence class. 
			
			We are left with showing \ref{it:gcancel}. For this, we in fact will prove the stronger claim that $\altDelta(kg-1)=\altDelta(kg) = 0$ for $k=1,\ldots,q/g$. Viewing $\altDelta$ as a map $\ZZ / {q\ZZ} \to H$, we claim that 
			\begin{align} \label{eq:TildeDeltaPalindrome}
				\altDelta(i)=\altDelta(p-1-i) 
			\end{align}
			for every $i \in \ZZ / {q\ZZ}$. The only nontrivial case is $i=0$: 
			if $q=p-1$ then $\altDelta(0)= \altDelta(q) = \altDelta(p-1)$, and if $q<p-1$ then by $q$-periodicity and $p$-palindromicity we get $\altDelta(0)=\altDelta(q)=\altDelta(p-1-q)=\altDelta(p-1)$. 
			
			This naturally leads us to the sequence
			\[ 
			\altDelta(0), \altDelta(p-1), \altDelta(-p), \ldots, \altDelta(-(k-1)p), \altDelta(kp-1), \altDelta(-kp), \ldots
			\]
			Besides having $\altDelta(-(k-1)p)= \altDelta(kp-1)$ by Equation~\eqref{eq:TildeDeltaPalindrome}, 
			we also have $\altDelta(kp-1) = \altDelta(-kp)$ by $\Delta(i)$ being \DQ{}  with period $p$, except when $[kp]_q-1=0$ or $[kp]_q=0$. 
			Since $g > 1$, we never have $[kp]_q=1$. 
			Thus, we let $b=\frac{q}{g}$ be the smallest natural number such that $[bp]_q=0$, so we get
			\[
			\altDelta(0) = \altDelta(p-1) = \altDelta(-p) = \cdots = \altDelta(-(b-1)p) = \altDelta(bp-1). %
			\]
			Note that the set of arguments in the above chain of equalities contains every $x \in \ZZ / {q\ZZ}$ that is congruent to $0$ or $-1$ modulo $g$.
			Moreover, since $\Delta(q-1) + \Delta(q) = 0$, we get $\altDelta(bp-1) = -\altDelta(0)$, rendering the whole sequence equal to 0, as desired.

			Now assume that $g$ equals 1. By $q$-periodicity it suffices to show that $\Delta(1), \ldots, \Delta(q)$ are equal up to a sign. 
			We define $\altDelta$ as above, and let $a$ be the smallest natural number such that $[ap]_q=1$. 
			Then, by similar arguments, we find that
			\begin{align*}
				\altDelta(0) &= \altDelta(p-1) = \altDelta(-p) = \cdots = \altDelta(-(a-1)p) = \\ 
				&= \altDelta(ap-1) = - \altDelta(ap) = \cdots \\
				&= -\altDelta(qp-1) = \altDelta(0).
			\end{align*} 
			Note that above, since $g=1$ we have that $[ap]_q = 0$ for the first time when $a=q$.
			So we find that all the $\altDelta(x)$, for $x \in \ZZ / {q\ZZ}$, are equal up to a sign, as desired. 
		\end{proof}
		
		\section{The Delta sequence is \DQ{}}
		
		In the following theorem we use the same notation as before, i.e. given a 1\-/quasihomomorphism $f$ we denote by $L_i$ the space $\symmspan{ \Delta_f(i)}$.
		
		\begin{thm} \label{thm:APAP}
			Let $f: \ZZ \to \Sym{n\times n,\ourfield}$ be a 1\-/quasihomomorphism. 
			Assume that $\dim (\sum_{i \in \ZZ\setminus\{0,-1\}}{L_i}) \geq 3$.	
			We can find a natural number $p$ such that $\Delta_f$ is \DQ{} with period $p$. Moreover, $p$ can be chosen such that $\dim(L_1 + \cdots + L_{p-2}) \leq 2$; hence in particular $\rk{\palinsum{\Delta}} \leq 2$. %
		\end{thm}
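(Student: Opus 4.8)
The plan is as follows. By Observation~\ref{lemma:f(1)=0} we may and do assume $f(1)=0$, so that the delta map $\Delta=\Delta_f$ and the lines $L_i=\symmspan{\Delta(i)}$ are defined and the hypothesis of the theorem is precisely Assumption~\ref{assum:dimSpanGeq3bis}, i.e.\ that $\dim V_\ell\geq 3$ for some $\ell$. Let $m$ be the \emph{smallest} such index. Since all $L_i$ lie in the finite-dimensional space $\Sym{n\times n,\ourfield}$, the nondecreasing function $\ell\mapsto\dim V_\ell$ is bounded; in particular $V_{m-1}\subsetneq V_m$, since the dimension jumps from $\leq 2$ to $\geq 3$. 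Hence $m$ satisfies the hypotheses of Lemma~\ref{lm:APAPdivides}, so $\Seq{\Delta(i)}_{-m-2}^{m+1}$ is \DQ{} with some period $p$, and every such $p$ divides $m+1$.

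The core of the argument is to promote this to all of $\ZZ$: for every $N$ the truncation $\Seq{\Delta(i)}_{-N}^{N-1}$ is \DQ{} with period $p$. I would prove this by induction, extending the range of indices on which the \DQ{} conditions have been checked by one full period at a time, the base case being the central interval $[-m-2,m+1]$; by the reflection symmetry of Observation~\ref{obs:symmetryOfAssumption} it suffices at each stage to produce the $p$ new entries on the right. To pin them down I would feed the $1$\-/quasihomomorphism identities \eqref{eq:PositiveSide} and \eqref{eq:NegativeSideShort} with the shift parameter $z$ placed at the current right boundary, which is a multiple of $p$ and hence a cancellation position, and let the other parameter range; using the already-established periodicity and palindromicity one telescopes the sums occurring there down to an expression involving a single unknown $\Delta$-value plus a rank-one error, exactly as in the inductive step of Lemma~\ref{lm:palindromic}. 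Corollary~\ref{cor:ABmatrices} then turns the resulting rank bounds into the periodicity identities $\Delta(i+p)=\Delta(i)$ for the new indices $i$ with $i\not\equiv 0,-1\bmod p$, while Corollary~\ref{cor:symMatrixRank2}---which applies because $\dim V_m\geq 3$ supplies three rank-one matrices whose spans sum to a $3$-dimensional space---forces the new cancellation identity $\Delta(j-1)+\Delta(j)=0$ at the next multiple $j$ of $p$.

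I expect this propagation step to be the main obstacle. The delicate point is the bookkeeping of which subspaces of $\Sym{n\times n,\ourfield}$ have ``already been seen'': Corollary~\ref{cor:symMatrixRank2} only bites when three of the rank-one matrices in play genuinely span a $3$-dimensional space, and Corollary~\ref{cor:ABmatrices} only bites when the rank-one error escapes the span of the matrix it is being added to. Beyond the last index where $V_\ell$ strictly grows there is no fresh direction available, so one must arrange for the error term always to be compared against the three fixed ``central'' directions provided by $\dim V_m\geq 3$; organizing the telescoping so that this is possible at every stage and on both sides of $0$ is where the real effort goes.

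Granting that $\Delta$ is \DQ{} with period $p$, the ``moreover'' is immediate. Since $p$ divides $m+1$ we have $p-2\leq m-1$, so each of $L_1,\dots,L_{p-2}$ is among the summands of $V_{m-1}=L_{-m}+\cdots+L_{-2}+L_1+\cdots+L_{m-1}$; by the minimality of $m$, $\dim V_{m-1}\leq 2$, hence $\dim(L_1+\cdots+L_{p-2})\leq 2$. Since $\im(\palinsum{\Delta})=\im\bigl(\Delta(1)+\cdots+\Delta(p-2)\bigr)\subseteq L_1+\cdots+L_{p-2}$, we conclude $\rk{\palinsum{\Delta}}\leq 2$.
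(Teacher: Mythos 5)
Your setup matches the paper up to the decisive step: the reduction to $f(1)=0$, the choice of the minimal $m$ with $\dim V_m\geq 3$, the use of Lemma~\ref{lm:APAPdivides} on the central window, the one-period-at-a-time induction, the treatment of the cancellation positions $N\equiv 0\bmod p$ via Corollary~\ref{cor:symMatrixRank2}, and your derivation of the ``moreover'' from minimality of $m$ are all correct. However, the step you defer --- pinning down the new entries at positions $N\not\equiv 0,-1\bmod p$ --- is the actual substance of the theorem, and the mechanism you propose for it does not work as stated. Telescoping Equation~\eqref{eq:PositiveSide} with $z=N$, $k=m$ via Lemma~\ref{lm:AddingTermsAPAPseq} yields $\rk{\Delta(i)+\Delta(m)-\Delta(N)}\le 1$ with $i=\class{N}{p}$, but Corollary~\ref{cor:ABmatrices} cannot simply ``turn this into'' $\Delta(N)=\Delta(i)$: the unknown $\Delta(N)$ may span a direction outside $V_2$ (or outside $V_m$ altogether), and the boundary entries $\Delta(ap),\Delta(ap-1)$ appearing for other values of $k$ are not confined to any previously seen subspace, so there is no rank-one summand you can certify to lie outside the span of the rest. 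Your closing paragraph acknowledges exactly this (``where the real effort goes'') without resolving it, so the proposal is a sketch with a gap at the key point.

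The paper closes this gap with machinery that is absent from your proposal. First, $p$ must be chosen \emph{minimal} among all periods making the central window $\Seq{\Delta(i)}_{-m-2}^{m+1}$ \DQ{} (you never impose this, and it is essential). The case $N\not\equiv 0,-1\bmod p$ is then settled by contradiction in two steps: if $\Delta(N)\neq\Delta(i)$, the rank bound above forces $L_N\not\subset V_2$; and if $L_N\not\subset V_2$, feeding Equation~\eqref{eq:PositiveSide} with $z=N$ and $k=1,\dots,p-2$ produces $q$-palindromicity, $q$-cancellation and $q$-periodicity relations with $q=\class{N}{p}+1$, which together with the $p$-structure and Lemma~\ref{lemma:DQgcd} (itself resting on the number-theoretic Lemma~\ref{lemma:equivalenceRelationModulo}) make the central window \DQ{} with period $\gcd(p,q)<p$, contradicting the minimality of $p$. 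Without the minimal choice of $p$ and this gcd argument, the induction you outline cannot be completed.
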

		
		\begin{proof}[Proof of Theorem \ref{thm:APAP}]
			Let $m$ be minimal such that $\dim V_m >2$. 
			By Lemma~\ref{lm:palindromic} we have that the sequence $\Seq{\Delta(i)}_{i=-m-2}^{m+1}$ %
			is~\DQ{} with period~$m+1$. Let $p$ be the minimal positive integer such that $\Seq{\Delta(i)}_{i=-m-2}^{m+1}$ is~\DQ{} with period~$p$. By Lemma \ref{lm:APAPdivides} we have that $p$ is a divisor of $m+1$. 
			We will show that the entire sequence $\Seq{\Delta(i)}_{i=-\infty}^{\infty}$ is \DQ{} with period~$p$. 
			Then, using minimality of $m$, we get that %
			\[\dim(L_1 + \cdots + L_{p-2}) = \dim V_{p-2} \le 2,\]
			which implies that $\rk{\palinsum{\Delta}} \leq 2$.

			Now, assume that for some $N$ the sequence $\Seq{\Delta(i)}_{i=-N}^{N-1}$ is \DQ{} with period~$p$. 
			We will simultaneously extend the sequence by one on both sides, and show $\Seq{\Delta(i)}_{i=-N-1}^{N}$ is still \DQ{} with period $p$; thus proving the theorem by induction. 
			
			We have three cases.
			If $N \equiv -1 \mod p$ there is nothing to prove, as illustrated in the following picture.
			
			\begin{figure}[H]
				\centering
				\begin{minipage}{0.5\textwidth}
					\resizebox{\textwidth}{!}{
\begingroup%
  \makeatletter%
  \providecommand\color[2][]{%
    \errmessage{(Inkscape) Color is used for the text in Inkscape, but the package 'color.sty' is not loaded}%
    \renewcommand\color[2][]{}%
  }%
  \providecommand\transparent[1]{%
    \errmessage{(Inkscape) Transparency is used (non-zero) for the text in Inkscape, but the package 'transparent.sty' is not loaded}%
    \renewcommand\transparent[1]{}%
  }%
  \providecommand\rotatebox[2]{#2}%
  \newcommand*\fsize{\dimexpr\f@size pt\relax}%
  \newcommand*\lineheight[1]{\fontsize{\fsize}{#1\fsize}\selectfont}%
  \ifx\svgwidth\undefined%
    \setlength{\unitlength}{501.29732238bp}%
    \ifx\svgscale\undefined%
      \relax%
    \else%
      \setlength{\unitlength}{\unitlength * \real{\svgscale}}%
    \fi%
  \else%
    \setlength{\unitlength}{\svgwidth}%
  \fi%
  \global\let\svgwidth\undefined%
  \global\let\svgscale\undefined%
  \makeatother%
  \begin{picture}(1,0.0952055)%
    \lineheight{1}%
    \setlength\tabcolsep{0pt}%
    \put(0,0){\includegraphics[width=\unitlength,page=1]{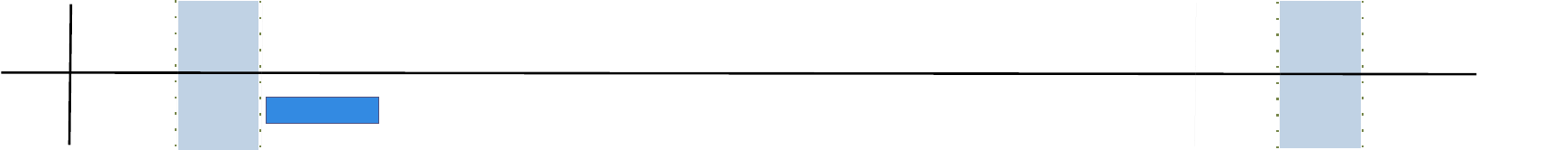}}%
    \put(0.12078428,0.06572535){\makebox(0,0)[lt]{\lineheight{1.25}\smash{\begin{tabular}[t]{l}-N-1\end{tabular}}}}%
    \put(0.83623286,0.06427417){\makebox(0,0)[lt]{\lineheight{1.25}\smash{\begin{tabular}[t]{l}N\end{tabular}}}}%
    \put(0.46966538,0.06401271){\makebox(0,0)[lt]{\lineheight{1.25}\smash{\begin{tabular}[t]{l}$\cdots$\end{tabular}}}}%
    \put(0.46922655,0.01687648){\makebox(0,0)[lt]{\lineheight{1.25}\smash{\begin{tabular}[t]{l}$\cdots$\end{tabular}}}}%
    \put(0.88791219,0.06244877){\makebox(0,0)[lt]{\lineheight{1.25}\smash{\begin{tabular}[t]{l}$\cdots$\end{tabular}}}}%
    \put(0.06780621,0.06386793){\makebox(0,0)[lt]{\lineheight{1.25}\smash{\begin{tabular}[t]{l}$\cdots$\end{tabular}}}}%
    \put(0.01515825,0.06346999){\makebox(0,0)[lt]{\lineheight{1.25}\smash{\begin{tabular}[t]{l}$i$\end{tabular}}}}%
    \put(-0.00196268,0.01553092){\makebox(0,0)[lt]{\lineheight{1.25}\smash{\begin{tabular}[t]{l}$\Delta(i)$\end{tabular}}}}%
    \put(0,0){\includegraphics[width=\unitlength,page=2]{Case1.pdf}}%
  \end{picture}%
\endgroup%

					}
					\caption{\normalfont Here we see that for $i=-N$ and $i=N-1$ we start and end with the palindromic block. Our Definition of APAP is not dependent of what entry we put next for $i=N$ and $i=-N-1$.} %
			\end{minipage}  
		\end{figure}
		
		Next, assume that $N \equiv 0 \mod p$. This case is illustrated as follows: 
		\begin{figure}[H]
			\centering
			\begin{minipage}{0.5\textwidth}
				\resizebox{\textwidth}{!}{
\begingroup%
  \makeatletter%
  \providecommand\color[2][]{%
    \errmessage{(Inkscape) Color is used for the text in Inkscape, but the package 'color.sty' is not loaded}%
    \renewcommand\color[2][]{}%
  }%
  \providecommand\transparent[1]{%
    \errmessage{(Inkscape) Transparency is used (non-zero) for the text in Inkscape, but the package 'transparent.sty' is not loaded}%
    \renewcommand\transparent[1]{}%
  }%
  \providecommand\rotatebox[2]{#2}%
  \newcommand*\fsize{\dimexpr\f@size pt\relax}%
  \newcommand*\lineheight[1]{\fontsize{\fsize}{#1\fsize}\selectfont}%
  \ifx\svgwidth\undefined%
    \setlength{\unitlength}{501.29732238bp}%
    \ifx\svgscale\undefined%
      \relax%
    \else%
      \setlength{\unitlength}{\unitlength * \real{\svgscale}}%
    \fi%
  \else%
    \setlength{\unitlength}{\svgwidth}%
  \fi%
  \global\let\svgwidth\undefined%
  \global\let\svgscale\undefined%
  \makeatother%
  \begin{picture}(1,0.0952055)%
    \lineheight{1}%
    \setlength\tabcolsep{0pt}%
    \put(0,0){\includegraphics[width=\unitlength,page=1]{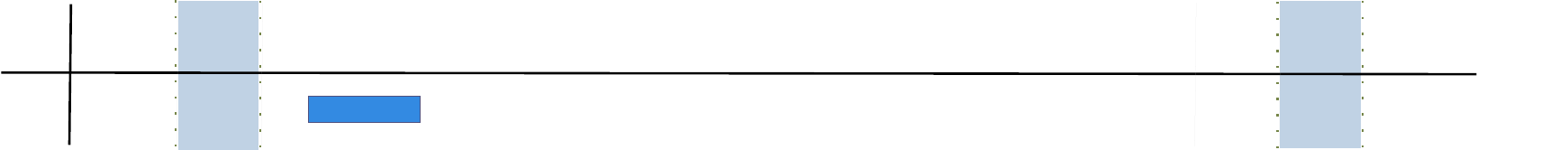}}%
    \put(0.12078428,0.06572535){\makebox(0,0)[lt]{\lineheight{1.25}\smash{\begin{tabular}[t]{l}-N-1\end{tabular}}}}%
    \put(0.83623286,0.06427417){\makebox(0,0)[lt]{\lineheight{1.25}\smash{\begin{tabular}[t]{l}N\end{tabular}}}}%
    \put(0.46966538,0.06401271){\makebox(0,0)[lt]{\lineheight{1.25}\smash{\begin{tabular}[t]{l}$\cdots$\end{tabular}}}}%
    \put(0.46922655,0.01687648){\makebox(0,0)[lt]{\lineheight{1.25}\smash{\begin{tabular}[t]{l}$\cdots$\end{tabular}}}}%
    \put(0.88791219,0.06244877){\makebox(0,0)[lt]{\lineheight{1.25}\smash{\begin{tabular}[t]{l}$\cdots$\end{tabular}}}}%
    \put(0.06780621,0.06386793){\makebox(0,0)[lt]{\lineheight{1.25}\smash{\begin{tabular}[t]{l}$\cdots$\end{tabular}}}}%
    \put(0.01515825,0.06346999){\makebox(0,0)[lt]{\lineheight{1.25}\smash{\begin{tabular}[t]{l}$i$\end{tabular}}}}%
    \put(-0.00196268,0.01553092){\makebox(0,0)[lt]{\lineheight{1.25}\smash{\begin{tabular}[t]{l}$\Delta(i)$\end{tabular}}}}%
    \put(0,0){\includegraphics[width=\unitlength,page=2]{Case2.pdf}}%
  \end{picture}%
\endgroup%

				}
				\caption{\normalfont Here our $\Delta$ starts for $i=N$ with an "end-cancellation", e.g. $-\alpha$ and it ends with another "start-cancellation", e.g. $\gamma$. Keeping our global picture in mind, we can see that the sum of the entries $\Delta(-N-1)$ and $\Delta(-N)$, resp. $\Delta(N-1)$ and $\Delta(N)$, should be zero.} 	
			\end{minipage}  
			\label{picture:case2}
		\end{figure} 
		
		So we need to show $\Delta(N-1) + \Delta(N) = 0$ and $\Delta(-N-1) + \Delta(-N) = 0$. We reason analogously to the proof of Item~(\ref{itemB}) from Lemma~\ref{lm:palindromic}.
		Equations~\eqref{eq:PositiveSide} and~\eqref{eq:NegativeSideShort} yield 
		\[    \rk{\Delta(i) - \Delta(N-1) - \Delta(N)} \le 1 \]
		for $i$ in $\aset{-N, \dots , N-1} \setminus \aset{-1,0}$.
		Since $\dim V_m >2$, there are three indices $i$ with linearly independent $L_i$, 
		so by Corollary~\ref{cor:symMatrixRank2} we get that $\Delta(N-1) + \Delta(N) = 0$.
		The other equality follows analogously. %
		
		For the last case, assume that $N \not \equiv -1, 0 \mod p$. %

		Let $i$ be the residue of $N$ when dividing by $p$. Now $\Delta(N)$ and $\Delta(-N-1)$ are both in a palindromic block, and we want to show that $\Delta(N) = \Delta(i) = \Delta(-N-1)$. We only prove the first equality, the second one being analogous.%
		
		We will prove that $\Delta(N)=\Delta(i)$ by contradiction in two steps:
		\begin{enumerate}
			\item Suppose that $\Delta(N) \neq \Delta(i)$, then $L_N \not \subset V_2$.
			\item $L_N \not \subset V_2$ leads to a contradiction with minimality of $p$.
		\end{enumerate}
		
		\begin{cl}
			Suppose that $\Delta(N) \neq \Delta(i)$, then $L_N \not \subset V_2$.
		\end{cl}
		\begin{proof} 
			Apply Equation~\eqref{eq:PositiveSide} with $k=m$ and $z=N$ to get:
			\begin{align*}
				\rk{\sum_{j=1}^{m} \Delta(j) - \sum_{j=0}^{m} \Delta(N-j)}  \le 1.
			\end{align*} 
			Rewrite the sum inside the previous expression as
			\begin{align*}
				\left(-\Delta(m+1) + \sum_{j=1}^{m+1} \Delta(j) \right) 
				- \left(\Delta(N) - \Delta(N-m-1) + \sum_{j=1}^{m+1} \Delta(N-j) \right).
			\end{align*}
			Note that by induction hypothesis
			both $\sum_{j=1}^{m+1} \Delta(j)$ and $\sum_{j=1}^{m+1} \Delta(N-j)$ 
			are sums of $m+1$ consecutive elements in an \DQ{} sequence, 
			and recall that $m+1$ is a multiple of $p$,
			so Lemma~\ref{lm:AddingTermsAPAPseq} implies that both sums cancel each other.
			Since $N-m-1 \equiv i \mod p $, we have
			\[    \rk{\Delta(i) + \Delta(m) - \Delta(N) } \le 1. \]
			Note that $L_m \not \subset V_2$ but $L_i \subseteq V_2$.
			So if also $L_N \subseteq V_2$ this would imply $L_m \not \subset \symmspan{\Delta(i)-\Delta(N)}$, but then Corollary \ref{cor:ABmatrices} yields 
			\[ \rk{\Delta(i) + \Delta(m) - \Delta(N) } = \rk{\Delta(i) - \Delta(N)} +1 \geq 2,\] which is a contradiction.
		\end{proof}
		\begin{cl} \label{claim:SmallerPeriod}
			If $L_N \not \subset V_2$, we get a contradiction with the minimality of~$p$.
		\end{cl}
		\begin{proof}
			We write $q=i+1$, where $i$ is still the residue of $N$ modulo $p$.
			We will apply Lemma \ref{lemma:DQgcd} to show that $(\Delta(i))_{i=-N}^{N-1}$ is \DQ{} with period equal to $\gcd(p,q)$. For this, we need to verify the three conditions. 
			
			Write $N=ap+q-1$. We apply Equation~\eqref{eq:PositiveSide} with $k=1$ and $z=N$: 
			\begin{align}
				\rk{\Delta(1) - (\Delta(N) + \Delta(N-1))} \leq 1.
			\end{align}
			Since our sequence is \DQ{} with period $p$, we find that \\ 
			$\Delta(N-1) = \Delta(ap+q-2) = \Delta(q-2)$, hence
			\begin{align}
				\rk{\Delta(1) - (\Delta(N) + \Delta(q-2))} \leq 1.
			\end{align}
			Since $L_N \not \subset V_2$ but $L_1, L_{q-2} \subset V_2$, we can apply Corollary~\ref{cor:ABmatrices} to $A=\Delta(1)-\Delta(q-2)$ and $B=\Delta(N)$ to conclude that $\Delta(1)-\Delta(q-2)=0$.
			Repeating the argument for $k=2,\ldots,q-2$, we find that
			$$\Delta(k) = \Delta(q-1-k) \text{ for } k=1,\ldots, q-2,$$  showing Condition~\eqref{it:qpalin} of Lemma~\ref{lemma:DQgcd}.
			
			For $k=q-1$, we find 
			$$
			\rk{\Delta(q-1) - (\Delta(N) + \Delta(ap))} \leq 1,
			$$
			but now $L_{ap}$ need not be in $V_2$ and we don't get any new information. However, for $k=q$, we get 
			$$
			\rk{\Delta(q-1) + \Delta(q) - (\Delta(N) + \Delta(ap) + \Delta(ap-1))} \leq 1.
			$$
			Now we know that $\Delta(ap) + \Delta(ap-1) = 0$ and conclude that 
			$$
			\Delta(q-1) + \Delta(q) = 0,
			$$ 
			which shows Condition~(\ref{it:qcancel}) of Lemma~\ref{lemma:DQgcd}.
			
			Now we continue with $k=q+1$:
			$$
			\rk{\Delta(q+1) - (\Delta(N) + \Delta(ap-2))} \leq 1.
			$$
			But we know that $\Delta(ap-2) = \Delta(p-2) = \Delta(1)$, and hence we conclude
			$$
			\Delta(q+1)=\Delta(1).
			$$
			We can continue this up to $k=p-2$, and find that
			\begin{align}
				\Delta(k) = \Delta(k-q) \text{ for } k=q+1,\ldots, p-2,
			\end{align}
			which is Condition~(\ref{it:qperiod}) of Lemma~\ref{lemma:DQgcd}. 
			We have verified all conditions, hence it holds that $(\Delta(i))_{i=-N}^{N-1}$ is \DQ{} with period $g := \gcd(p,q)$. 
			Hence, the shorter sequence $(\Delta(i))_{i=-m-2}^{m+1}$ is \DQ{} with period $g$ strictly less than $p$; contradicting our choice of $p$. 
		\end{proof}

		This finishes our induction, and thus the proof.
	\end{proof}

	\section{Proof of the main result}
	
	Putting everything together we get:
	
	\begin{proof}[Proof of Theorem~\ref{thm:GeneralCase}]
		By Theorem~\ref{thm:APAP} we find that $\Delta$ is \DQ{} with period~$p$. 
		Define 
		\[A = \dfrac{\palinsum{\Delta}}{p}=\dfrac{\Delta(1) + \dots + \Delta(p-2)}{p} = \dfrac{f(p-1)}{p}. \]
		We will show that Equation~\eqref{eq:mainTheorem} holds with this $A$. We restrict to the case $x \geq 1$; the other case being analogous.
		Write $x = ap + r$ with $1 \le r \le p$.
		If $x \geq 1$,%
		we have that  $f(x) = \Delta(1) + \dots + \Delta(x-1)$.
		Applying Lemma~\ref{lm:AddingTermsAPAPseq} we get that:
		\begin{align} 
			\label{eq:SubstitutingSymmetry}
			\nonumber     f(x) = &\Delta(1) + \dots + \Delta(x-1)  \\
			\nonumber         = &aB_{\Delta} + \sum_{j=1}^{r-1} \Delta(ap + j)  \\
			= &apA + \sum_{j=1}^{r-1} \Delta(ap + j). 
		\end{align}
		
		We have two cases. 
		First, we assume that $r = p$. 
		Equation~\eqref{eq:SubstitutingSymmetry} becomes
		\begin{align*} 
			f(x) &= apA + \sum_{j=1}^{p-1} \Delta(ap + j)  \\
			&= apA + \sum_{j=1}^{p-2} \Delta(ap + j) + \Delta(x-1)\\
			&= apA + pA + \Delta(x-1) = xA + \Delta(x-1).
		\end{align*}	
		It follows that
		\[\rk{f(x) - xA} = \rk{\Delta(x-1)} \le 1. \]
		If $r < p$, Equation~\eqref{eq:SubstitutingSymmetry}
		becomes
		\[f(x) =  apA + \sum_{j=1}^{r-1} \Delta(j).\]
		In particular $\im(f(x)-x\cdot A) \subseteq \sum_{i=1}^{p-2}{L_i}$.
		But by Theorem \ref{thm:APAP}, $\dim \sum_i L_i \leq 2$,  and hence $\rk{f(x) - xA} \leq 2$. 
	\end{proof}
	\begin{acks}
        TS, NT were partially supported by Swiss National Science Foundation (SNSF) project grant 200021 191981. TS was partially supported by Science Foundation -- Flanders (FWO) grant 1219723N. AV was supported by the Swiss National Science Foundation (SNSF)  grant 200142.
	\end{acks}

	\bibliographystyle{ACM-Reference-Format}
	\bibliography{bibliography}
	
\end{document}